\documentclass{article}

\usepackage{amssymb,latexsym,amsmath,epsfig,amsthm}
\usepackage{enumitem}
\usepackage{bbm}

\newtheorem{theorem}{Theorem}
\newtheorem{lemma}{Lemma}
\newtheorem{proposition}{Proposition}

\theoremstyle{definition}
\newtheorem{definition}{Definition}

\newtheorem{remark}{Remark}

\begin{document}

\title{Carmichael Numbers with a Specified Number of Prime Factors}
\author{Daniel Larsen \and Thomas Wright}
\date{}
\maketitle

\begin{center}
{\it To Carl Pomerance on his 80th birthday}
\end{center}

\begin{abstract}
For every sufficiently large integer $R$, there exists a Carmichael number with exactly $R$ prime factors.
\end{abstract}

\section{Introduction}

A natural number $n$ is said to be \emph{Carmichael} if for every integer $a$, $a^n \equiv a \pmod{n}$. Korselt's criterion \cite{K:1899} tells us that a square-free composite number $n$ is Carmichael if and only if $p-1 \mid n-1$ for every prime $p$ dividing $n$.

Alford, Granville, and Pomerance \cite{AGP:1994} proved in 1994 that there exist infinitely many Carmichael numbers. Moreover, it is known \cite{AGHS:2014} that for every $R$ between three and 10 billion, there exist Carmichael numbers with precisely $R$ prime factors. Granville and Pomerance \cite{GP:2002} have conjectured that for every $R \geq 3$, there are $x^{1/R + o(1)}$ Carmichael numbers less than $x$ with exactly $R$ prime factors, a conjecture that seems far out of reach of modern methods.

In this paper, we will prove the following weaker form of this conjecture.

\begin{theorem}\label{thm:main}
For all sufficiently large $R$, there exist Carmichael numbers with precisely $R$ prime factors.
\end{theorem}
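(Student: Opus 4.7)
The plan is to work within the framework of Alford, Granville, and Pomerance. By Korselt's criterion, if $L$ is a positive integer and $S$ is a set of primes $p$ with $p \nmid L$ and $p - 1 \mid L$, then any subset $T \subseteq S$ with $\prod_{p \in T} p \equiv 1 \pmod L$ is automatically a Carmichael number having exactly $|T|$ prime factors. Thus Theorem~\ref{thm:main} reduces to producing, for each sufficiently large $R$, a choice of $L$, an associated set $S$ of primes as above, and a subset $T \subseteq S$ of cardinality exactly $R$ whose product is $\equiv 1 \pmod L$. For the construction of $L$ and $S$ I would use the standard Prachar--Erd\H{o}s/AGP recipe (possibly in one of the improved forms due to Harman or Baker--Harman): take $L$ to be a highly composite integer chosen so that the corresponding set $S$ is as large as possible relative to $\phi(L)$, with $|S|$ growing without bound as $L$ does.

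Next, to isolate subsets of a prescribed cardinality with product $\equiv 1 \pmod L$, I would use the generating function
\[
F_\chi(z) := \prod_{p \in S}\bigl(1 + z\,\chi(p)\bigr),
\]
indexed by Dirichlet characters $\chi$ modulo $L$. By orthogonality,
\[
N_R := \#\Bigl\{T \subseteq S : |T| = R,\ \textstyle\prod_{p \in T} p \equiv 1 \pmod L\Bigr\} \;=\; \frac{1}{\phi(L)} \sum_{\chi \bmod L} [z^R]\, F_\chi(z),
\]
and the principal character contributes the expected main term $\binom{|S|}{R}/\phi(L)$. The remaining goal is to show that the total contribution from nonprincipal $\chi$ is strictly smaller than this main term, so that $N_R \geq 1$.

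The hard part, I expect, is obtaining a sufficiently uniform bound on $\bigl|[z^R]\, F_\chi(z)\bigr|$ for $\chi \neq \chi_0$. By Cauchy's integral formula this is essentially equivalent to bounding $|F_\chi(z)|$ on circles $|z|=r$; the trivial bound is $(1+r)^{|S|}$, and what one really needs is a power saving of the form $(1+r)^{(1-\delta)|S|}$ for some fixed $\delta > 0$. Such a saving is a Hal\'asz-type pretentious estimate: for each nonprincipal $\chi$, the multiset $\{\chi(p)\}_{p \in S}$ cannot concentrate near a single phase on the unit circle. Establishing this uniformly in $\chi$ -- most likely by combining large-sieve or zero-density input for Dirichlet $L$-functions modulo $L$ with a careful arithmetic choice of $S$ -- and then calibrating $L$, $S$, and $R$ so that the resulting bound actually yields $N_R \geq 1$ for \emph{every} sufficiently large $R$, is where I expect the bulk of the work and the main difficulty to lie.
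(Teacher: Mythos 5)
Your reduction to finding, for each $R$, a subset $T\subseteq S$ of size exactly $R$ with $\prod_{p\in T}p\equiv 1\pmod{L}$ is the right starting point (and is where the paper also starts), but the analytic core of your plan breaks at the quadratic characters, and this is precisely the central difficulty the paper is built around. For the highly composite $L$ of the Prachar--Erd\H{o}s/AGP construction, $(\mathbb{Z}/L\mathbb{Z})^{\times}$ has $2^{\omega(L)}-1$ characters of order $2$, and for such $\chi$ the values $\chi(p)$ are $\pm 1$ on the thin, highly structured set $S=\{dk+1 \text{ prime}: d\mid L\}$. No Hal\'asz-type or large-sieve input gives the uniform saving $\bigl|[z^R]F_\chi(z)\bigr|\le (1+r)^{(1-\delta)|S|}$ here: the large sieve controls character sums only on average, while you need individual cancellation for exponentially many (in $\omega(L)$) quadratic characters. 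Worse, the obstruction is algebraic, not analytic. If some quadratic $\psi$ has $\psi(p)=-1$ for every $p\in S$ --- and nothing in the construction rules this out without taking the parameters infeasibly large, as the introduction notes --- then applying $\psi$ to any relation $\prod_{p\in T}p\equiv 1\pmod{L}$ forces $(-1)^{|T|}=1$, so $N_R=0$ identically for odd $R$; in your notation $[z^R]F_\psi(z)=(-1)^R\binom{|S|}{R}$ exactly cancels the principal-character main term. This is the ``terrifyingly large number of subgroups of index $2$'' issue, and no improved bound can repair it; it must be engineered around.

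The paper's route differs on both fronts in ways your proposal would need to absorb. It never sums over the full dual group: following AGP, subset products congruent to $1$ modulo $L$ are produced combinatorially via the Davenport constant (Propositions \ref{prop:davenport2} and \ref{alteredAGPprop}), and the only equidistribution input needed is for cosets of \emph{odd} index (Equation \eqref{eq:uniform} and the bound \eqref{eq:probbound}), where the number of cosets is manageable; the $2$-torsion is absorbed by demanding only that the target residue be a square and that $\omega(N)$ be even (Proposition \ref{prop:Lresidue}). It also does not hit the exact cardinality $R$ in one shot: Lemma \ref{lem:combinatorial} grows $\omega$ in steps of a fixed $h$ by repeatedly multiplying in $h$ fresh primes whose product is $1$ modulo $M$, sweeping out a full residue class modulo $h$ in a long interval, and Section 6 converts even counts to odd ones by fixing the prime $3$ via the auxiliary numbers $A_1,A_2$ with $A_i\equiv 3^{-1}$ modulo the complementary $L_j$. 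To salvage your approach you would at minimum need to (a) restrict the character analysis to the odd part of the group and treat the $2$-part separately, and (b) replace ``exact cardinality $R$'' by ``cardinality in a prescribed class mod $h$'' plus an iterative multiplication step --- at which point you have essentially reconstructed the paper's argument.
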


Such a result was previously known only under the assumption of Dickson's prime $k$-tuples conjecture \cite{C:1935}.  Our result establishes unconditionally that there exist infinitely many Carmichael numbers with a prime number of prime factors, strengthening a result of the second author \cite{W:2024a} which showed this to be true under the assumption of a conjecture of Heath-Brown regarding the first prime in an arithmetic progression.

The traditional AGP method for producing infinite families of Carmichael numbers is based on constructing a set of primes $\mathcal{P}$ along with positive integers $k$ and $L$ such that:
\begin{itemize}
\item $k \mid p-1$ for every $p \in \mathcal{P}$
\item $\frac{p-1}{k} \mid L$ for every $p \in \mathcal{P}$ 
\item $\lambda(L)$, the maximum order of an element in $(\mathbb{Z}/L\mathbb{Z})^{\times}$, is small compared to $|\mathcal{P}|$
\item $k$ and $L$ are relatively prime
\end{itemize} One then studies the distribution of randomly chosen products of elements of $\mathcal{P}$ modulo $L$. Essentially because of the absence of large cyclic subgroups of $(\mathbb{Z}/L\mathbb{Z})^{\times}$, it can be shown that with probability at least $1/\phi(L) - \epsilon$, such a product is congruent to 1 modulo $L$. Let $\Pi$ be such a product. Then $\Pi \equiv 1 \pmod{L}$, but since each of its factors has the form $dk+1$ for some integer $d$, $\Pi$ is also congruent to $1 \pmod{k}$. Since $\gcd(L,k)=1$, $\Pi \equiv 1 \pmod{Lk}$. Finishing off the argument, every prime factor $p = dk+1$ of $\Pi$ satisfies $p-1 \mid dk \mid Lk \mid \Pi-1$. Hence $\Pi$ is a Carmichael number.

This proof relies on the size of $\mathcal{P}$ being large compared to $\lambda(L)$, but in fact, we can decompose $\mathcal{P}$ into many disjoint subsets, each of which is still large compared to $\lambda(L)$. It is easy to see that if we pick one Carmichael number obtained from each subset, we obtain a large set of Carmichael numbers such that all products obtained by multiplying distinct elements from this pool are themselves Carmichael. The basic point here is that the product of numbers congruent to 1 mod $L$ is itself 1 mod $L$. This story can be found in \cite{AGP:1994}, and one conclusion is that the set $\{\omega(n) : n \text{ Carmichael}\}$ has some additive structure.

Our argument begins with the observation that for such $\mathcal{P}$, we can produce long sequences of Carmichael numbers, each of which is formed from the previous one by multiplying in $h$ additional prime factors, for some positive integer $h$. For instance, $h$ could be the most frequent number of prime factors for the Carmichael numbers in our pool. The challenge then becomes showing that for arbitrary $a \bmod{h}$, we can construct Carmichael numbers from $\mathcal{P}$ with $a \bmod{h}$ prime factors. To do this, we will use methods similar to \cite{L:2025}. All that then remains is to show that we can fuse these two constructions to cover every number of the form $a + bh$ with $b$ in a reasonable range, which is to say, cover every number in a large interval. Repeating this construction with larger sets $\mathcal{P}$, we obtain overlapping sets of integers covered by our construction going out to infinity.

It will turn out that because $(\mathbb{Z}/L\mathbb{Z})^{\times}$ has a terrifyingly large number of subgroups of index 2, we will have some additional difficulties in producing Carmichael numbers with an odd number of prime factors. The issue of handling index 2 subgroups is a well-known obstruction in the study of Carmichael numbers - see, for instance, the difference between the lower bounds in \cite{M:2013} and \cite{W:2013}. However, our specific construction cannot use the techniques of \cite{W:2013} without taking $x$, the parameter which controls the size of the prime factors of our Carmichael numbers, infeasibly large in terms of the base parameter $y$. The solution is to instead fix a prime factor like 3 and try to arrange for the number of prime factors added on to produce a Carmichael number to be even, which can be done quite naturally.

We will begin by recalling the machinery of \cite{L:2025}, then stepping back temporarily to recall some key ideas underlying the machinery, and adapting them to suit our particular needs. We will then be able to prove that all sufficiently large even numbers are represented by values of $\omega$ on Carmichael numbers. Finally, we will reengage with the full machinery to complete the proof.
\section{Notation and Terminology}

We use a combination of Vinogradov notation and $O$-notation, at times preferring the flexibility of the latter, which allows for expressions such as $x^{1-o(1)}$, which represents a quantity larger than $x^{1-\epsilon}$ for any fixed $\epsilon > 0$, for sufficiently large $x$.

We say \emph{almost none} of the elements in a set $\mathcal{S}$ have a property if the number of elements with the property is $o(|\mathcal{S}|)$. Conversely, \emph{almost all} the elements have a property if almost none of the elements satisfy the negation of the property.

We let $\phi(n)$ represent Euler's totient function and $\lambda(n)$ represent Carmichael's lambda function, defined as the smallest positive $m$ for which $a^m\equiv 1\pmod n$ for all $a\in (\mathbb Z/n\mathbb Z)^\times$. We also define $\omega(n)$ to be the number of distinct prime factors of $n$.
\section{Establishing Equidistribution}
We begin by summarizing the construction which occupies the first seven sections of \cite{L:2025}.
\begin{proposition}\label{prop:main}
There exist positive integers $x$, $L_1$, $L_2$, $k_1$, $k_2$, $A_1$, $A_2$ and sets $\mathcal{P}_1$, $\mathcal{P}_2$ such that:

\medskip

\textbf{Integer properties:}
\begin{enumerate}[label=(\roman*)]
\item $k_1, k_2 \leq x$
\item $k_1$ and $k_2$ are divisible by $2$ but not by $3$ or $4$
\item $k_1k_2$ is relatively prime to $L_1L_2$
\item $L_1$ is relatively prime to $L_2$
\item $\gcd(\phi(k_i), \phi(L_j))$ is a power of $2$ for any choice of $i,j \in \{1,2\}$
\item $\gcd(\phi(L_1), \phi(L_2))$ is a power of $2$
\item \label{item:orderbound} $\max_{p \mid k_1k_2L_1L_2} \mathrm{ord}_{2}(p-1) = O(1)$
\item $3$ is a quadratic residue modulo $L_1$ and $L_2$
\item $\lambda(L_1)$ and $\lambda(L_2)$ are not divisible by $4$
\end{enumerate}

\textbf{Prime properties:}
\begin{enumerate}[label=(\roman*),resume]
\item Every element in $\mathcal{P}_1$ has the form $dk_1+1$ where $d$ divides $L_1$
\item Every element in $\mathcal{P}_2$ has the form $dk_2+1$ where $d$ divides $L_2$
\item $\lambda(L_1L_2)=|\mathcal{P}_1|^{o(1)}$ and $\lambda(L_1L_2)=|\mathcal{P}_2|^{o(1)}$
\end{enumerate}

\pagebreak[3]
\textbf{Carmichael properties:}
\begin{enumerate}[label=(\roman*),resume]
\item $A_1 \equiv 1 \pmod{L_1}$ and $A_1 \equiv 3^{-1} \pmod{L_2}$
\item $A_2 \equiv 1 \pmod{L_2}$ and $A_2 \equiv 3^{-1} \pmod{L_1}$
\item Every prime factor of $A_1$ has the form $dk_1+1$ where $d$ divides $L_1$
\item Every prime factor of $A_2$ has the form $dk_2+1$ where $d$ divides $L_2$
\item $A_1$ and $A_2$ each have an even number of prime factors
\item \label{item:oddorder} $3A_1$ has odd order modulo $k_2$ and $3A_2$ has odd order modulo $k_1$
\item $A_1$ and $A_2$ are relatively prime
\item $A_1A_2$ is relatively prime to $k_1k_2L_1L_2$
\item $A_1A_2$ is not divisible by any element of $\mathcal{P}_1$ or $\mathcal{P}_2$
\end{enumerate}

\textbf{Equidistribution properties:}
\begin{enumerate}[label=(\roman*),resume]
\item \label{item:charsum1} For every non-principal character $\chi$ modulo $k_2$, for any $z$, there are at least $|\mathcal{P}_1|/\log x$ elements $p \in \mathcal{P}_1$ for which $\chi(p) \neq z$
\item \label{item:charsum2} For every non-principal character $\chi$ modulo $k_1$, for any $z$, there are at least $|\mathcal{P}_2|/\log x$ elements $p \in \mathcal{P}_2$ for which $\chi(p) \neq z$
\item \label{item:prod1} For any set of $\lfloor k_2/(\log x \log\log x)\rfloor$ residues modulo $k_2$, almost none of the integers which can be written as a product of $10$ elements from $\mathcal{P}_1$ have modulo $k_2$ residue in that set
\item \label{item:prod2} For any set of $\lfloor k_1/(\log x \log\log x)\rfloor$ residues modulo $k_1$, almost none of the integers which can be written as a product of $10$ elements from $\mathcal{P}_2$ have modulo $k_1$ residue in that set
\end{enumerate}
\end{proposition}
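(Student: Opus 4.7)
Most of the listed properties are drawn from the construction of \cite{L:2025}, which I would run in two disjoint copies (indexed by $1$ and $2$). The new ingredients specific to this paper are the integers $A_1, A_2$, which realize a linked system of congruences across the two copies, and the product equidistribution bounds (xxiv)--(xxv). The plan is to build the data in three stages: first the integer and prime properties, then the character-sum bounds, then the $A_i$.

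For the integer and prime properties, I would apply the Prachar--Rankin--Pomerance smooth-number construction as in \cite{L:2025} to produce integers $L$ with $\lambda(L) = x^{o(1)}$, then restrict the admissible prime factors $q \mid L$ to those satisfying $q \equiv \pm 1 \pmod{12}$. This makes $3$ a quadratic residue modulo $L$ and bounds $\mathrm{ord}_2(q-1)$, handling (vii)--(ix). To split $L$ into coprime factors $L_1, L_2$ with $\gcd(\phi(L_1),\phi(L_2))$ a power of $2$, I would partition the admissible $q$'s further by residue modulo an auxiliary prime. The $k_i$ are then chosen as small products of primes from a residue class disjoint from the prime factors of $L_1 L_2$, with (ii), (v), (vi) arranged by hand. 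The sets $\mathcal{P}_i$ consist of primes $p \in [x, 2x]$ of the form $dk_i + 1$ with $d \mid L_i$, extracted via Bombieri--Vinogradov; the crucial bound (xii) holds because $\lambda(L_1 L_2) = x^{o(1)}$ by smoothness while $|\mathcal{P}_i| \geq x^{1-o(1)}/k_i$ by the sieve.

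The single-character equidistribution bounds (xxii), (xxiii) follow from Siegel--Walfisz applied to primes in arithmetic progressions modulo $k_{3-i}$, using that $\gcd(k_1, k_2) = 1$. The product bounds (xxiv), (xxv) follow by expanding the indicator of the putative residue set in characters modulo $k_{3-i}$ and applying a tensor power of the single-character bound from (xxii), (xxiii); ten factors are enough to beat the density threshold $1/(\log x \log\log x)$.

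The principal obstacle is (xiii)--(xxi), the construction of the $A_i$. I would build $A_i$ as a product of primes from $\mathcal{P}_i$ whose residue modulo $L_{3-i}k_{3-i}$ lies in a target coset: $1 \pmod{L_i}$ (automatic), $3^{-1} \pmod{L_{3-i}}$, and $3^{-1}$ times an odd-order element modulo $k_{3-i}$. The AGP pigeonhole argument guarantees that products of $O(\log \phi(L_{3-i}k_{3-i}))$ primes from $\mathcal{P}_i$ are equidistributed over attainable cosets, so the only question is attainability. Conditions (viii) and (ix) are arranged precisely so that $3^{-1}$ is a square modulo $L_{3-i}$ with $\lambda(L_{3-i})$ not divisible by $4$, placing the $L_{3-i}$-component of the target in the subgroup generated by the $\mathcal{P}_i$-residues; (vii) controls the $2$-Sylow of $(\mathbb{Z}/k_{3-i})^\times$ so that the $k_{3-i}$-component is also attainable. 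The parity condition (xvii) is then handled by fixing the length of the product to be even from the outset, which is possible because attainability does not depend on parity. Finally (xix)--(xxi) are secured by discarding the $O(1)$ bad primes from $\mathcal{P}_i$ that would create a common factor, a negligible loss relative to $|\mathcal{P}_i|$.
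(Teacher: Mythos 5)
Your overall architecture---running two linked copies of the construction from \cite{L:2025} and building the $A_i$ as subset products aimed at prescribed cosets---matches the paper, but two of your key mechanisms do not work as described.

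First, the equidistribution properties. Siegel--Walfisz says nothing about $\mathcal{P}_1$ modulo $k_2$: once $k_1$ is fixed, $\mathcal{P}_1$ is the sparse structured set $\{dk_1+1 \in \mathbb{P} : d \mid L_1\}$, and its distribution modulo $k_2$ is governed by the multiplicative behaviour of the divisors of $L_1$, not by primes in progressions modulo $k_2$. Worse, (xxiv) genuinely does not follow from tensoring (xxii): a set $\mathcal{P}_1$ in which all but $O(|\mathcal{P}_1|/\log x)$ elements are congruent to $1 \bmod{k_2}$ (the small remainder equidistributed) can satisfy (xxii), yet a proportion $(1-O(1/\log x))^{10} = 1-o(1)$ of its $10$-fold products then lie in the single residue class $1 \bmod{k_2}$, violating (xxiv); the per-character saving of order $1/\log x$ is far too weak to beat the density threshold $1/(\log x \log\log x)$. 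The paper instead obtains all four properties generically in $k$: the uniform distribution \eqref{eq:uniform} of the divisors of $L$ yields the coset-probability bound \eqref{eq:probbound}, whence the set $\widetilde{\mathcal{K}}$ of $k \leq x$ for which the $dk+1$ concentrate in a proper odd-order coset is of size $\ll x/\log^2 x$, and $k$ is chosen by pigeonhole outside it.

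Second, the $A_i$. Your assertion that ``attainability does not depend on parity'' is precisely the index-$2$ obstruction the paper singles out as the main new difficulty: $(\mathbb{Z}/L\mathbb{Z})^{\times}$ has an enormous number of index-$2$ subgroups, and prescribing $\omega(A_i)$ to be even while also prescribing $A_i$ modulo $L$ requires the character-sum argument of Proposition \ref{prop:Lresidue}, which works in the augmented group $H_3 = G_3 \oplus \mathbb{Z}/2\mathbb{Z}$ and uses crucially that the target is a square, hence trivial in the elementary abelian $2$-Sylow $G_3$. You correctly locate the role of (viii)--(ix) here, but the even-length claim is the theorem, not a free normalization. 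Likewise, (xviii) cannot be achieved by steering a product into the coset ``$3^{-1}$ times an odd-order element mod $k_{3-i}$'': the equidistribution available modulo $k_{3-i}$ only certifies odd-order targets, and the image of $3^{-1}$ in the $2$-Sylow quotient of $(\mathbb{Z}/k_{3-i}\mathbb{Z})^{\times}$ is uncontrolled. The paper instead builds a large family $\mathcal{A}_1$ of candidates with the correct residues modulo $L_1L_2$ and shows (Proposition \ref{prop:oddconstruction}, via Equation (11) of \cite{L:2025}) that for almost all choices of $k_2$ some member has $3A_1$ of odd order---one varies the modulus, not the target. A smaller point: the prime-production step rests on the AGP lower bound $\pi(dx;d,1) \geq \pi(dx)/(2\phi(d))$ for all $d \leq x^{1/3}$ outside a bounded exceptional set, which comes from log-free zero-density estimates rather than Bombieri--Vinogradov, and $|\mathcal{P}_i|$ is of size $|\mathcal{D}|/\log x$, counted over divisors $d$ with $k$ fixed by pigeonhole, not $x^{1-o(1)}/k_i$.
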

In proving this, a major focus is on showing that the divisors of $L_1$ are distributed almost uniformly in the maximal proper subgroups of the Sylow subgroups of $(\mathbb{Z}/L_2\mathbb{Z})^{\times}$. The input to this result, codified in Lemmas 10 and 11 from \cite{L:2025}, is Property 7*. However, this property derives from Property 7, which tells us that it is equally true that the divisors of $L_1$ are distributed almost uniformly in the maximal proper subgroups of the Sylow subgroups of $(\mathbb{Z}/L_1\mathbb{Z})^{\times}$.

To understand where this construction comes from, it is helpful to take a step backward, which will also allow us to impose the stronger uniformity we require, as in the previous paragraph. It is convenient to introduce the following definition.

\begin{definition}
A sequence $a_1,\ldots, a_k$ of positive numbers is said to be \emph{logarithmically spread} if for every $i < k$, $1+1/1000 \leq \log a_{i+1}/\log a_i \leq 10$.
\end{definition}

Now we have the following proposition, combining Proposition 3 and Lemma 10 of \cite{L:2025} and taking $y_1=y^{\rho}$, $y_2=y^{\theta}$, $y_3=y^{\theta(1+\iota)}$, $y_4=y^{\theta+\rho-\iota}$, $y_5=y$, and $w = \lfloor y^{\theta + \iota\rho} \rfloor$. (Recall that in that paper, $\iota$ is a small constant (for concreteness, let us specify $\iota = 1/1000$), $\rho = 1/24 - 2\iota$, and $\theta = 1/6 - 2\iota$.)

\begin{proposition}
For every large integer $y_5$, there exist positive numbers $y_1$, $y_2$, $y_3$, and $y_4$ as well as a positive integer $w$ and a positive square-free integer $L$ such that the following hold:
\begin{itemize}
\item The sequence $y_1, y_2, w, y_3, y_4, y_5$ is logarithmically spread
\item $y_2 = y_5^{1/6 - 1/500}$
\item $\omega(L) \geq y_4$
\item $q-1$ is square-free for every prime $q$ dividing $L$
\item Taking $L_{(p)}$ to be the product of those primes $q$ with $p \mid q-1$ and $q \mid L$, $L_{(p)} > 1$ only for $p \in [y_3,y_5]$ or $p = 2$
\item For every $p > 2$, $\omega(L_{(p)}) \leq y_1$
\item The number of primes $p$ with $L_{(p)} > 1$ is $O(y_2)$
\item $3$ is a quadratic residue modulo $L$
\item Taking $\mathcal{D}$ to be the set of divisors of $L$ with $w$ prime factors,
\begin{equation}\label{eq:uniform}
\#\{d\in \mathcal{D}: d\equiv a \pmod{L_{(p)}}\}=\frac{|\mathcal{D}|}{\phi(L_{(p)})}\big(1+o(1)\big)
\end{equation}
for each residue class $a\in (\mathbb{Z}/L_{(p)}\mathbb{Z})^{\times}$ for every odd prime $p$ with $L_{(p)}>1$.
\end{itemize}
\end{proposition}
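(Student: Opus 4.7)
The plan is to translate directly between parameterizations and invoke the two cited results of \cite{L:2025}. First I would apply Proposition 3 of that paper with $y_5=y$ and the other parameters set as in the statement. This immediately provides a square-free $L$ with $\omega(L)\geq y_4$, with $q-1$ square-free for all primes $q\mid L$, with the support condition on $L_{(p)}$ (its nontrivial primes lying in $[y_3,y_5]\cup\{2\}$), with $\omega(L_{(p)})\leq y_1$ for odd $p$, with $O(y_2)$ primes for which $L_{(p)}>1$, and with $3$ a quadratic residue modulo $L$. So the only properties in the conclusion that require additional checking are the logarithmic spread of $y_1,y_2,w,y_3,y_4,y_5$, the identity $y_2=y_5^{1/6-1/500}$, and the equidistribution \eqref{eq:uniform}.

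The identity is immediate: $\theta=1/6-2\iota=1/6-1/500$ and $y_5=y$, so $y_2=y^\theta=y_5^{1/6-1/500}$. For the logarithmic spread, I would compute the five ratios $\log y_2/\log y_1$, $\log w/\log y_2$, $\log y_3/\log w$, $\log y_4/\log y_3$, and $\log y_5/\log y_4$, and verify that each lies in $[1+1/1000,10]$. Each reduces to a simple rational expression in $\iota$, $\rho$, and $\theta$; since $\iota=1/1000$ is chosen very small relative to $\rho=1/24-2\iota$ and $\theta=1/6-2\iota$, these ratios all sit comfortably inside the required window, and the verification is a routine calculation.

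For the equidistribution in \eqref{eq:uniform}, I would take $\mathcal{D}$ to be the set of divisors of $L$ having exactly $w$ prime factors and invoke Lemma 10 of \cite{L:2025} for each odd prime $p$ with $L_{(p)}>1$. The hypotheses of that lemma — governing the interaction between $\omega(L_{(p)})$, the product size $w$, and the size of the primes dividing $L_{(p)}$ — are exactly those arranged by our parameter choices and the logarithmic spread already checked. The main obstacle, philosophically, is the character-sum estimate inside Lemma 10 that converts spread of the divisor set into uniformity modulo $L_{(p)}$; however, that estimate is already established in \cite{L:2025}, so the work here reduces to parameter bookkeeping — ensuring that the translation $(y_1,\ldots,y_5,w)\leftrightarrow(\iota,\rho,\theta)$ does not violate any of the slack conditions hidden in the original statements.
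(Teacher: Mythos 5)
Your proposal takes essentially the same route as the paper, which states this proposition without further proof as the direct combination of Proposition 3 and Lemma 10 of \cite{L:2025} under exactly the substitution you describe ($y_1=y^{\rho}$, $y_2=y^{\theta}$, $y_3=y^{\theta(1+\iota)}$, $y_4=y^{\theta+\rho-\iota}$, $y_5=y$, $w=\lfloor y^{\theta+\iota\rho}\rfloor$), so the content really is citation plus parameter bookkeeping. The one caveat concerns your claim that the consecutive log-ratios sit ``comfortably'' in $[1+1/1000,10]$: since $\rho<\theta$, the ratio $\log w/\log y_2=1+\iota\rho/\theta$ (and likewise $\log y_3/\log w$) is in fact slightly below $1+1/1000$, so a literal check of the paper's definition of logarithmically spread fails at those two steps --- an issue inherited from the paper's own parameter choices rather than from your approach, but one your ``routine calculation'' would surface rather than dispose of.
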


\begin{remark}
Roughly speaking, this proposition demonstrates how $L_1$ and $L_2$ are constructed in Proposition \ref{prop:main}. From there, $k_i$ are chosen to make 
\[\mathcal{P}_i = \{dk_i+1 \in \mathbb{P} : d \mid L_i\}\]
large.
\end{remark}

Note that
\begin{equation}\label{eq:product}
\sum_{q \mid L}\frac{1}{q} \leq \sum_{\substack{p > 2 \\ L_{(p)}>1}}\sum_{\substack{q \equiv 1 \pmod{p} \\ q \in [y_3,y_5]}}\frac{1}{q} = \sum_{\substack{p > 2 \\ L_{(p)}>1}}\frac{O(1)}{p} = \frac{O(y_2)}{y_3} = o(1),
\end{equation}
which in particular means that almost all integers less than $L$ are units modulo $L$.

Let $s=y_1^2$. Let $k$ be a fixed integer relatively prime to $L$. Fix an odd prime $p$ with $L_{(p)}>1$, let $G= (\mathbb{Z}/L_{(p)}\mathbb{Z})^{\times}$, and let $G'$ be a coset of index $p$. Consider the probability that $d_1k+1,\ldots,d_sk+1\in G'$ where each $d_i$ is chosen uniformly and independently from $\mathcal{D}$. This of course is equal to the $s$th power of the probability that $(dk+1)$ is contained in $G'$ for a single $d$ chosen uniformly from $\mathcal{D}$, which by Equations \eqref{eq:uniform} and \eqref{eq:product} is $\frac{1}{p}\big(1+o(1)\big)$. Thus the overall probability is bounded by 
$$\left(\frac{1}{p}\big(1+o(1)\big)\right)^s\leq y_3^{-s(1-o(1))}.$$

Note that $G$ contains $p \cdot \frac{p^{\mathrm{rank}_p(G)}-1}{p-1}$ cosets of index $p$. Recall that $\omega(L_{(p)}) \leq y_1$. Thus, the probability that any $G'$ exists is bounded by 
\begin{align*}
&\underbrace{y_3^{-s(1-o(1))}}_{\substack{\text{probability for} \\ \text{single coset}}} \cdot \underbrace{y_5^{O(y_1)}}_{\substack{\text{number of cosets} \\ \text{for fixed $p$}}} \cdot \underbrace{y_5}_{\substack{\text{number of} \\ \text{choices of $p$}}}.
\end{align*}
That is, the probability that $d_1k+1,\ldots,d_sk+1$ all lie in a proper coset of $G$ of odd index is bounded by 
\begin{equation}\label{eq:probbound}
y_3^{-s(1-o(1))}.
\end{equation}

\section{Finding Primes}

Let $\pi(x;q,a)$ count the number of primes up to $x$ congruent to $a$ mod $q$. As in \cite{AGP:1994}, for $B = 1/3$, it holds for sufficiently large $x$ that 
$$\pi(dx;d,1) \geq \frac{\pi(dx)}{2\phi(d)}$$
for every $d \leq x^B$ not containing a divisor in some exceptional set $\mathcal{D}_B(x)$ of bounded size. Set $x=y_5^{\frac{w}{1/6-3/1000}}$. This choice is simply to maintain compatibility with \cite{L:2025}; the important thing is that every $d \in \mathcal{D}$ is less than $x^B$. Let $\mathcal{D}'$ be the subset of $\mathcal{D}$ consisting of those elements which are divisible by none of the elements in $\mathcal{D}_B(x)$. For fixed $d \in \mathcal{D}'$, let $\mathcal{K}_d$ be the set of $k < x$ with $dk+1$ prime. We then have 
$$|\mathcal{K}_d| = \pi(dx;d,1) > \frac{x}{3\log x}.$$
Indeed, 
$$\frac{\phi(d)}{d} \geq \frac{\phi(L)}{L} = 1-o(1)$$
by Equation \eqref{eq:product}. Now for each prime $q$ dividing $L$, the number of $k$ divisible by $q$ in $\mathcal{K}_d$ is 
$$\pi(dx;dq,1) < \frac{3dx}{\log x \cdot \phi(dq)} < \frac{4x}{\log x \cdot q}$$
by the Brun-Titchmarsh theorem. Therefore, the number of $k \in \mathcal{K}_d$ relatively prime to $L$ is greater than
$$\frac{x}{3\log x} - 4\sum_{q \mid L}\frac{x}{\log x \cdot q} > \frac{x}{4\log x}$$
by Equation \eqref{eq:product}.

Let $\widetilde{\mathcal{K}}$ be the set of $k \leq x$ for which there exists a proper odd order coset of $(\mathbb{Z}/L\mathbb{Z})^{\times}$ containing $dk+1$ for at least $|\mathcal{D}|/(\log x \log\log x)$ values of $d \in \mathcal{D}'$. Then there exist $|\widetilde{\mathcal{K}}| \cdot (|\mathcal{D}|/(\log x \log\log x))^s$ choices of $k \leq x$ and $d_1,\ldots,d_s \in \mathcal{D}'$ (where repeats are allowed and order matters) with all of $\{d_ik+1\}_i$ in some proper coset. However, by Equation \eqref{eq:probbound}, there are at most $x|\mathcal{D}|^s/y_3^{s(1-o(1))}$ such choices with that property. Hence 
$$|\widetilde{\mathcal{K}}| < \frac{x(\log x \log\log x)^s}{y_3^{s(1-o(1))}}=x\left(\frac{\log x \log\log x}{y_3^{1-o(1)}}\right)^s$$
and since $\log x \ll w\log y_5$, this quantity is far smaller than, for instance, $x/\log^2 x$.

Let $\mathcal{K}'_d = \mathcal{K}_d \setminus \widetilde{\mathcal{K}}$. We have shown that $|\mathcal{K}'_d| \geq x/(4\log x)$. By the pigeonhole principle, there exists a specific value $k \in \mathcal{K}'_d$ such that there are at least $|\mathcal{D}|/(4\log x)$ primes of the form $dk+1$ with $d \in \mathcal{D}'$. Let $\mathcal{P}$ be this set.

Note that this construction, performed twice with separate values of $L$ and $k$ and some careful bookkeeping which can be found in \cite{L:2025}, is enough to recover most of Proposition \ref{prop:main}. More importantly, Section 7 of \cite{L:2025} shows that the equidistribution conditions of Proposition \ref{prop:main} hold generically, so we essentially obtain them for free. It remains to discuss the construction of $A_1$ and $A_2$, which we will take up in the next section.

\section{Producing Carmichael Numbers}

The Davenport constant for a finite abelian group $G$ is the smallest integer $n$ for which any sequence of $n$ elements of $G$ contains a non-empty subsequence whose product is equal to the identity.

We will need the following bound about the Davenport constant, adapted from Theorem 2 of \cite{AGP:1994}.

\begin{proposition}\label{prop:davenport2}
Let $n$ be the Davenport constant of $(\mathbb{Z}/L\mathbb{Z})^{\times}$. Then
$$n \leq \lambda(L)(1 + \log(|L|/\lambda(L))) + 1.$$
\end{proposition}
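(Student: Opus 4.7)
I would follow the classical Kruyswijk--van Emde Boas argument for bounding the Davenport constant of a finite abelian group, adapted as in Theorem 2 of \cite{AGP:1994}. Write $G = (\mathbb{Z}/L\mathbb{Z})^{\times}$, so that $|G| = \phi(L)$ and $\exp(G) = \lambda(L)$. Fix a sequence $a_1, \ldots, a_n$ of elements of $G$ having no non-empty zero-sum subsequence; the goal is to bound $n$.

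The setup is to track the sets of partial subset products
$$S_k \;=\; \Big\{\prod_{i\in I} a_i : I \subseteq \{1,\ldots,k\}\Big\} \;\subseteq\; G,$$
which satisfy $S_0 = \{1\}$ and $S_k = S_{k-1} \cup a_k S_{k-1}$. A first easy observation is that $|S_k| > |S_{k-1}|$ for every $k$: if instead $a_k S_{k-1} = S_{k-1}$, then $a_k$ stabilizes $S_{k-1}$ setwise, so $\langle a_k\rangle \subseteq S_{k-1}$ because $1 \in S_{k-1}$; in particular $a_k^{-1} = a_k^{\mathrm{ord}(a_k)-1} \in S_{k-1}$, which combined with the singleton $\{a_k\}$ produces a non-empty zero-sum. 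This already yields the weak bound $n \leq \phi(L) - 1$.

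To sharpen this to the claimed bound, I would next argue that $|S_k|$ in fact grows geometrically, at a rate controlled by $\lambda$. The heart of the matter is a doubling lemma: if $|S_{k+\lambda}|$ is not substantially larger than $|S_k|$, then within the block $a_{k+1}, \ldots, a_{k+\lambda}$ many of the $2^\lambda$ subset products must coincide modulo the stabilizer of $S_k$; exploiting that every element of $G$ has order dividing $\lambda$, such collisions can be converted into a non-empty zero-sum. Iterating this geometric growth from $|S_0|=1$ up to $|S_k| \asymp \phi(L)/\lambda(L)$, followed by the crude $+1$ increment for a final window of length $\lambda(L)$, reaches $|S_k| > \phi(L)$ after roughly $\lambda(L)(1 + \log(\phi(L)/\lambda(L)))$ steps, which is impossible; so $n$ is bounded as claimed.

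The main obstacle is exactly this doubling lemma: the stabilizer argument alone delivers only arithmetic growth $|S_k| \geq k+1$, and upgrading it to geometric growth requires using the exponent $\lambda$ and the zero-sum-free hypothesis jointly, at the level of full $\lambda$-blocks rather than single terms. This is the technical core of the Kruyswijk--van Emde Boas bound, and I would largely import it from \cite{AGP:1994} rather than rederive it, with only cosmetic adjustments to put the constant in the form $\lambda(L)(1 + \log(|L|/\lambda(L))) + 1$.
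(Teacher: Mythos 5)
Your proposal is correct in substance and takes the same route as the paper: the paper offers no proof of this proposition at all, but simply imports Theorem 2 of \cite{AGP:1994} (the van Emde Boas--Kruyswijk bound), and deferring to that result is exactly what you do. The only genuine content of the ``adaptation'' is worth making explicit, though: AGP bound the length $n(G)$ of the longest sequence with no nonempty subsequence multiplying to the identity, and the Davenport constant is $n(G)+1$, which is where the trailing $+1$ comes from; and $|G|=\phi(L)\le L$, which is why $L$ may replace $\phi(L)$ inside the logarithm. One caveat on your sketch of the internals: the stabilizer argument giving $|S_k|>|S_{k-1}|$ is fine, but the ``doubling lemma'' endgame does not add up as written --- passing from $|S_k|\asymp\phi(L)/\lambda(L)$ to $\phi(L)$ by unit increments costs on the order of $\phi(L)$ further steps, not a single window of length $\lambda(L)$ --- and the actual proof in \cite{AGP:1994} is a character-sum/generating-function argument rather than a set-doubling one. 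Since you explicitly import the technical core from \cite{AGP:1994} rather than rederiving it, this does not affect the correctness of your proposal.
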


We will also need Proposition 7 from \cite{L:2025}.

\begin{proposition}\label{alteredAGPprop}
Let $G$ be a finite abelian group, let $g\in G$, and let $n$ be such that for any sequence of $n$ elements of $G$, there is a non-empty subsequence whose product is 1. Suppose $r>t>n$. Suppose $S$ is a sequence of length $r$ whose terms multiply to $g$. Then there are at least $\binom{r}{t}\binom{r}{n}^{-1}$ subsequences of $S$ of length at most $t$ whose product is $g$.
\end{proposition}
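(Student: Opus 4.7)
The plan is a double-counting in the spirit of AGP: construct a map $\Phi$ from the $\binom{r}{t}$ size-$t$ subsequences of $S$ into the family of length-at-most-$t$ subsequences of $S$ with product $g$, and show each fiber of $\Phi$ has size at most $\binom{r}{n}$.

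Given a size-$t$ subsequence $X$ of $S$, since $|X| = t > n$, the hypothesis on $n$ applied to $X$ produces a non-empty $X_0 \subseteq X$ whose product is $1$. By iterating the extraction---peeling successive product-$1$ blocks off $X$ as long as at least $n$ elements remain outside the aggregate---I build up a single product-$1$ subsequence $X' \subseteq X$ whose size can be steered into the window $[r - t,\, r - 1]$. Setting $\Phi(X) := S \setminus X'$ yields a subsequence of $S$ whose product is the product of $S$ divided by the product of $X'$, namely $g$, and whose length $r - |X'|$ is at most $t$.

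The fiber over an image $U$ is precisely the set of size-$t$ supersets of the fixed subsequence $X' = S \setminus U$, hence has cardinality $\binom{r - |X'|}{t - |X'|}$, which a routine binomial estimate bounds by $\binom{r}{n}$ given $|X'| \geq r - t$. This gives the claimed lower bound of $\binom{r}{t}/\binom{r}{n}$ on the size of the image of $\Phi$. The main obstacle will be the coordination between the two constraints on $|X'|$---it must be large enough to secure the length bound on $\Phi(X)$ but small enough to keep the fiber bound tight---a balancing act made feasible by the hypothesis $r > t > n$ and the iterative application of the Davenport-style extraction.
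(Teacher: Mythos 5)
There is a genuine gap, and it is fatal to the approach as described. You take $X'$ to be a product-$1$ subsequence extracted from inside a size-$t$ subsequence $X$, so necessarily $|X'| \leq |X| = t$. But for $\Phi(X) = S \setminus X'$ to have length $r - |X'| \leq t$ you need $|X'| \geq r - t$, and these two constraints are compatible only when $r \leq 2t$ (in fact, since the peeling stops once fewer than $n$ elements of $X$ remain un-aggregated, you are only guaranteed $|X'| \geq t - n + 1$, so you would need $r \leq 2t - n + 1$). The proposition is stated for arbitrary $r > t > n$, and in this paper it is applied with $r = y_5^{\sqrt{y_2 w}}$ while $t$ is on the order of twice the Davenport constant of $(\mathbb{Z}/L_1L_2\mathbb{Z})^{\times}$, i.e.\ $r$ is astronomically larger than $t$. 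So the ``window'' $[r-t, r-1]$ you want to steer $|X'|$ into is empty of achievable values; the hypothesis $r > t > n$ does not rescue the balancing act you flag at the end.

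The repair is to change what indexes the map and where the extraction happens. Index by the $(t-n)$-element subsequences $A$ of $S$ rather than the $t$-element ones, and peel product-$1$ blocks off $S \setminus A$ (which has $r - t + n > n$ elements, so the extraction can always continue) until a residue $B$ with $|B| \leq n-1$ remains; then $\pi(B) = \pi(S\setminus A) = g\,\pi(A)^{-1}$, so $W := A \cup B$ has product $g$ and length at most $t-1$. The fiber over $W$ consists of $(t-n)$-subsets of $W$, hence has size at most $\binom{t}{n}$, giving at least $\binom{r}{t-n}\binom{t}{n}^{-1}$ good subsequences; by the identity $\binom{r}{t}\binom{t}{n} = \binom{r}{n}\binom{r-n}{t-n}$ this is at least $\binom{r-n}{t-n}\binom{t}{n}^{-1} = \binom{r}{t}\binom{r}{n}^{-1}$. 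The essential point you are missing is that the large product-$1$ set must be carved out of (nearly all of) $S$, not out of a set of size $t$. Note also that the paper itself does not prove this statement; it imports it as Proposition 7 of \cite{L:2025}, which follows the argument just sketched, adapted from Proposition 1.2 of \cite{AGP:1994}.
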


Since with probability 1, for $n$ randomly chosen primes, there is a sub-product congruent to 1 modulo $L$, there exists $h_0 \leq n$ such that with probability at least $1/n$, for any randomly chosen set of primes taken from $\mathcal{P}$, there is a sub-product of length $h_0$ congruent to 1 modulo $L$.

Let $r = y_5^{\sqrt{y_2 w}}$. Note that
$$\lambda(L) = \prod_{p: L_{(p)} > 1} p = y_5^{O(y_2)}$$
whereas
$$|\mathcal{P}| \gg \binom{\omega(L)}{w} \cdot (\log x)^{-1} > y_5^{\epsilon w}$$
for some absolute constant $\epsilon > 0$. Consequently,
\begin{equation}\label{eq:Pbound}
\log \lambda(L) / \log r = o(1), \quad \log r / \log |\mathcal{P}| = o(1), \quad \text{and } \log |\mathcal{P}| \gg \log x.
\end{equation}

\begin{proposition}\label{prop:Lresidue}
Let $a \bmod{h}$ be any arithmetic progression containing even numbers with $h \ll \lambda(L)^2$. For any quadratic residue $\square \bmod{L}$, and for almost all $r$-element subsets $\mathcal{P}^{\star} \subset \mathcal{P}$, there exists a positive integer $N$ with the following properties:
\begin{itemize}
\item $N > 1$
\item All prime factors of $N$ are in $\mathcal{P}^{\star}$
\item $N \equiv \square \pmod{L}$
\item $\omega(N) \equiv a \pmod{h}$
\item $\omega(N)$ is even
\end{itemize}
\end{proposition}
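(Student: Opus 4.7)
I would first reduce to the case where both $h$ and $a$ are even. If $h$ is odd, replace the congruence with $\omega(N)\equiv a'\pmod{2h}$ for the unique even residue $a'$ with $a'\equiv a\pmod h$ (available since the original progression meets the evens), using $2h\ll\lambda(L)^2$. If $h$ is already even, the hypothesis that the progression contains even numbers forces $a$ to be even. Thereafter $\omega(N)$ even is automatic from $\omega(N)\equiv a\pmod h$.

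Now work in $G:=(\mathbb{Z}/L\mathbb{Z})^{\times}\times\mathbb{Z}/h\mathbb{Z}$ via the homomorphism $\phi(\mathcal{S})=\bigl(\prod_{p\in\mathcal{S}}p\bmod L,\;|\mathcal{S}|\bmod h\bigr)$; a non-empty $\mathcal{S}\subseteq\mathcal{P}^\star$ with $\phi(\mathcal{S})=(\square,a)$ delivers the required $N=\prod_{p\in\mathcal{S}}p$. Since $\lambda(G)\leq\lambda(L)h\leq\lambda(L)^3$, Proposition~\ref{prop:davenport2} gives $D(G)=\lambda(L)^{O(1)}=y_5^{O(y_2)}$; this is much smaller than $r=y_5^{\sqrt{y_2w}}$ because $w\gg y_2$, so $\log D(G)/\log r=o(1)$ and there is plenty of room for AGP-style manipulations.

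The crucial step is verifying $(\square,a)\in H:=\langle\phi(\mathcal{P}^\star)\rangle$ for almost all $\mathcal{P}^\star$. By Pontryagin duality, I must show that every character $\mu=\chi\otimes\psi\in\widehat G$ satisfying $\chi(p)\psi(1)=1$ for all $p\in\mathcal{P}^\star$ also satisfies $\mu(\square,a)=1$. The constraint says $\chi$ is constant on $\mathcal{P}^\star$. Write $\chi=\chi'\chi''$ where $\chi'$ has $2$-power order and $\chi''$ has odd order; by coprimality of orders, constancy of $\chi$ forces each of $\chi'$ and $\chi''$ to be constant. For $\chi''$ nontrivial, Equation~\eqref{eq:probbound} shows each fibre of $\chi''|_\mathcal{P}$ has density at most $\tfrac{1}{3}+o(1)$, so the chance $\chi''$ is constant on a size-$r$ random subset is at most $(\tfrac{1}{3}+o(1))^r$, and summing over the at most $\phi(L)\leq y_5^{y_4}$ such characters is still $o(1)$ because $r\gg y_4\log y_5$. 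Moreover $q-1$ is square-free for every $q\mid L$, so $\lambda(L)$ is twice an odd number and $\chi'$ has order at most $2$. Hence for almost all $\mathcal{P}^\star$ the relevant $\chi$ is quadratic, giving $\chi(\square)=1$ (quadratic characters are trivial on squares) and $\psi(a)=\psi(1)^a=(\pm1)^a=1$ (since $a$ is even); therefore $\mu(\square,a)=1$.

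With $(\square,a)\in H$ established, an AGP-style argument produces the desired $\mathcal{S}$. Using the integer $h_0$ introduced just before this proposition, I would first extract pairwise disjoint sub-sequences $B_1,\ldots,B_M\subseteq\mathcal{P}^\star$ of length $h_0$ each multiplying to $1\pmod L$, with $M$ large enough that multiples of $h_0$ cover every residue class of $h/\gcd(h,h_0)\cdot\mathbb{Z}/h\mathbb{Z}$. From the remaining primes, Proposition~\ref{alteredAGPprop} applied in $G$ furnishes a sub-sequence $T$ with $\prod T\equiv\square\pmod L$, and the character analysis of the previous paragraph shows that the parity of $|T|$ is forced by a single quadratic character which vanishes on $\square$, hence is compatible with $a$. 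Appending an appropriate subset of $\{B_i\}$ tunes the total length modulo $h$ to $a$. The principal obstacle is the character analysis, where the quadratic-residue hypothesis on $\square$ and the evenness of $a$ conspire to disarm the $2^{\omega(L)}$ quadratic characters of $(\mathbb{Z}/L\mathbb{Z})^{\times}$---precisely the ones not controlled by Equation~\eqref{eq:probbound}.
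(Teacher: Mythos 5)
Your reduction to $a,h$ even and your identification of the dangerous characters are sound and agree with the paper's analysis: since $q-1$ is square-free for every $q\mid L$, the $2$-Sylow subgroup of $(\mathbb{Z}/L\mathbb{Z})^{\times}$ is elementary abelian, so the only characters not killed by the odd-index equidistribution are quadratic, and these are trivial on $(\square,a)$ because $\square$ is a square and $a$ is even. The genuine gap is the bridge from ``$(\square,a)$ lies in the subgroup generated by $\phi(\mathcal{P}^{\star})$'' to ``$(\square,a)$ is realized by a subset product.'' These are far from equivalent here: $r$ is much smaller than $|G|$ (compare $\log r=\sqrt{y_2w}\,\log y_5$ with $\log|G|\gg y_4\log y_3$), the set of subset products of $r$ group elements is not a union of cosets, and converting membership into representation is precisely where the work lies. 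The paper does not argue via membership at all; it directly evaluates the counting sum $\sum_{\chi\in\hat{G}}\overline{\chi(g)}\prod_{j=1}^{r}\frac{1+\chi(p_j)}{2}$, shows that every character with a nontrivial odd-order component is swept into an error term of size $O(\epsilon|G|)$ with $\epsilon<e^{-\sqrt{r}}$, and observes that the surviving quadratic characters all satisfy $\chi(g)=1$, so the main term is a nonnegative count of subsets, bounded below via Proposition~\ref{alteredAGPprop}. Your character analysis amounts to identifying the main-term characters, but you never carry out the count that shows a representing subset exists.

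The constructive bridge you sketch in its place does not work. Proposition~\ref{alteredAGPprop} only produces subsequences whose product equals the product $g$ of the \emph{entire} sequence; it cannot ``furnish a sub-sequence $T$ with $\prod T\equiv\square\pmod{L}$'' unless you have already arranged for the full product of the remaining primes to have that value (together with the right $\mathbb{Z}/h\mathbb{Z}$-coordinate), which is essentially the statement to be proved. Moreover, appending blocks $B_i$ of length $h_0$ moves $\omega$ only within the coset $|T|+\gcd(h,h_0)\mathbb{Z}$ modulo $h$, so you would additionally need $|T|\equiv a\pmod{\gcd(h,h_0)}$; your argument controls only the parity of $|T|$ and leaves the odd part of $\gcd(h,h_0)$ unaddressed. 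Both defects disappear if you abandon the two-stage plan and run the Fourier count directly, as the paper does.
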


\begin{proof}
We may assume that $a$ and $h$ are even without loss of generality. Indeed, we may multiply them by two if not.

With probability 1, the majority of $p_j$ do not lie in any specific coset of $(\mathbb{Z}/L\mathbb{Z})^{\times}$ of odd order. For the rest of the proof, therefore, we may assume that for any odd order coset, the majority of the primes lie outside the coset.

Let $g$ be the element $(\square, a)$ in
$$G:=(\mathbb{Z}/L\mathbb{Z})^{\times}\oplus \mathbb{Z}/h\mathbb{Z}.$$
Let $h'$ be the largest odd factor of $h$. We decompose $G$ as $G_1\oplus G_2\oplus G_3\oplus G_4$ where $G_1$ is the maximal subgroup of odd order in $(\mathbb{Z}/L\mathbb{Z})^{\times}$, $G_2 = \mathbb{Z}/h'\mathbb{Z}$, $G_3$ is the 2-Sylow subgroup of $(\mathbb{Z}/L\mathbb{Z})^{\times}$, and $G_4$ is the 2-Sylow subgroup of $\mathbb{Z}/h\mathbb{Z}$. Recall that $q-1$ is square-free for every prime $q$ dividing $L$. Therefore, every element in $G_3$ other than the identity has order $2$. In particular, $\square$ corresponds to $1$ in $G_3$.

Label the elements of $\mathcal{P}^{\star}$ as $p_1, \ldots, p_r$ and let $P$ be their product. We need to bound
\begin{equation}\label{eq:mainbound}
\frac{|G|}{2^r} \sum_{d \mid P} \mathbbm{1}_{G}(d=g) = \frac{1}{2^r} \sum_{d \mid P} \sum_{\chi \in \hat{G}} \overline{\chi(g)} \cdot \chi(d).
\end{equation}
Switching the order of summation, this is equal to
$$\sum_{\substack{\chi_i \in \hat{G}_i \\ i \in \{1,2,3,4\}}}\overline{\chi_1(g)\chi_2(g)\chi_3(g)\chi_4(g)}\prod_{j=1}^r\frac{1+\chi_1(p_j)\chi_2(p_j)\chi_3(p_j)\chi_4(p_j)}{2},$$
where $p_j$ is viewed as the element $(p_j \bmod{L}, 1)$ (according to the original description of $G$). Let $*(\mathbf{\chi})$ be the condition that
$$\chi_1(p_j)\chi_2(p_j)\chi_3(p_j)\chi_4(p_j)=1$$
for the majority of $p_j$. We may break our sum over characters as follows:
$$\sum_{\substack{\chi_i \in \hat{G}_i \\ i \in \{1,2,3,4\} \\ *(\mathbf{\chi})}}\overline{\chi_1(g)\chi_2(g)\chi_3(g)\chi_4(g)}\prod_{j=1}^r\frac{1+\chi_1(p_j)\chi_2(p_j)\chi_3(p_j)\chi_4(p_j)}{2}+O\left(\epsilon|G|\right),$$
where $\epsilon$ is the largest value of
$$\left|\prod_{j=1}^r\frac{1+\chi_1(p_j)\chi_2(p_j)\chi_3(p_j)\chi_4(p_j)}{2}\right|$$
for characters not satisfying $*(\mathbf{\chi})$.

Write $H_1 = G_1$, $H_2 = G_2$, and $H_3 = G_3 \oplus \mathbb{Z}/2\mathbb{Z}$. Since $*(\mathbf{\chi})$ is certainly not true if $\chi_4$ has order greater than two, Equation \eqref{eq:mainbound} is equal to
$$\sum_{\substack{\chi_i \in \hat{H}_i \\ i \in \{1,2,3\} \\ *(\mathbf{\chi})}}\overline{\chi_1(g)\chi_2(g)\chi_3(g)}\prod_{j=1}^r\frac{1+\chi_1(p_j)\chi_2(p_j)\chi_3(p_j)}{2}+O\left(\epsilon|G|\right).$$
By our coset discussion from earlier, if $\chi_1$ is not principal then $*(\mathbf{\chi})$ is not satisfied because $\chi_2(p_j)\chi_3(p_j)$ does not depend on $j$. It also becomes apparent that for $*(\mathbf{\chi})$ to be satisfied, we need $\chi_2$ to be principal. Therefore, Equation \eqref{eq:mainbound} equals
$$\sum_{\substack{\chi_3 \in \hat{H}_3 \\ *(\mathbf{\chi})}}\overline{\chi_3(g)}\prod_{j=1}^r\frac{1+\chi_3(p_j)}{2}+O\left(\epsilon|G|\right).$$
Since $\chi_3(g)=1$, Equation \eqref{eq:mainbound} equals
$$\sum_{\substack{\chi_3 \in \hat{H}_3 \\ *(\mathbf{\chi})}}\prod_{j=1}^r\frac{1+\chi_3(p_j)}{2}+O\left(\epsilon|G|\right).$$
The main term is equal to a constant $|H_3|/2^r$ times the number of divisors of $P$ corresponding to the identity as an element of $H_3$. Proposition \ref{alteredAGPprop} tells us that the number of such divisors is quite large, certainly larger than $2^r/e^{-\lambda(L)^2}$, whereas
$$\epsilon < \left(1-\lambda(L)^{-7}\right)^{r/2} < e^{-\sqrt{r}}.$$ In particular, Equation \eqref{eq:mainbound} has positive real part, which means there has to exist $N$ satisfying our conditions.
\end{proof}

We need one more important tool, of a combinatorial nature.

\begin{lemma}\label{lem:combinatorial}
Let $\mathcal{P}$ be a set of primes and $M$ be an integer such that $p-1 \mid M$ for every $p \in \mathcal{P}$. Let $r$ be a positive integer such that $|\mathcal{P}| \geq r^4$. Suppose $N$ is congruent to 1 mod $M$ and comprised of $O(r)$ distinct primes in $\mathcal{P}$. Finally, suppose $h$ is such that with probability greater than or equal to $1/r$, any $r$-element subset of $\mathcal{P}$ contains a sub-product of length $h$ congruent to 1 mod $M$. Then for every integer in the range $[\omega(N), r^2]$ congruent to $\omega(N)$ mod $h$, there exists a Carmichael number with exactly that many prime factors.
\end{lemma}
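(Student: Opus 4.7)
My plan is to show first that $N$ itself is a Carmichael number---this follows immediately from Korselt's criterion, since $N$ is square-free, $N\equiv 1\pmod M$, and each prime $p\mid N$ satisfies $p-1\mid M\mid N-1$---and then, for each $j=0,1,\ldots,j_{\max}:=\lfloor(r^2-\omega(N))/h\rfloor$, to produce a Carmichael number with exactly $\omega(N)+jh$ prime factors by multiplying $N$ by $j$ pairwise disjoint \emph{good} $h$-subsets $S_1,\ldots,S_j\subset\mathcal{P}$. Here a good $h$-subset consists of $h$ primes whose product is $\equiv 1\pmod M$ and which avoids the prime factors of $N$; provided the $S_i$ are disjoint from each other and from those prime factors, the enlarged product is square-free, is $\equiv 1\pmod M$, and has $\omega(N)+jh$ distinct prime factors, each of which still satisfies $p-1\mid M$. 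Korselt's criterion then certifies that the product is Carmichael.

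The bulk of the work is therefore producing $j_{\max}$ pairwise disjoint good $h$-subsets inside $\mathcal{P}\setminus\{p:p\mid N\}$. An obvious greedy approach---pick one good $h$-subset, remove its primes, repeat---barely fails: the hypothesis supplies only probability $\geq 1/r$ that a random $r$-subset contains a good $h$-subset, while the probability that a random $r$-subset meets the $O(r)$ already-used primes is itself of order $1/r$, leaving essentially no margin after a single step. My proposal is instead to execute a single random-partition argument: partition $\mathcal{P}$ uniformly at random into $\lfloor|\mathcal{P}|/r\rfloor\geq r^3$ blocks of size $r$. By the symmetry of a uniform partition, each block is marginally distributed as a uniform random $r$-subset of $\mathcal{P}$, so the probabilistic hypothesis applies block by block: each one contains a good $h$-subset with probability at least $1/r$. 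Linearity of expectation then gives an expected count of at least $r^2$ good blocks, so some partition achieves at least that many.

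At most $\omega(N)=O(r)$ of these good blocks can be polluted by a prime factor of $N$, so at least $r^2-\omega(N)\geq j_{\max}$ good unpolluted blocks remain, and a good $h$-subset extracted from each furnishes the pairwise disjoint family required. The main obstacle is precisely the step of producing many disjoint good $h$-subsets simultaneously from a probabilistic hypothesis about a single random $r$-subset; the random-partition trick bypasses this cleanly because marginally every block still looks like a uniform random $r$-subset, which both side-steps the greedy union-bound issue and avoids any need for concentration estimates or hypergraph matching lemmas.
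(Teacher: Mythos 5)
Your argument is correct, but it takes a genuinely different route from the paper. The paper proceeds greedily: it extracts one good $h$-sub-product at a time, each time adding its primes to a forbidden set $\widetilde{\mathcal{P}}_j$ (initialized as the prime factors of $N$) and re-invoking the probabilistic hypothesis, using the fact that a uniform $r$-subset meets the forbidden set with probability at most $r\,|\widetilde{\mathcal{P}}_j|/|\mathcal{P}| \leq |\widetilde{\mathcal{P}}_j|/r^3$, which stays below the $1/r$ threshold so long as the accumulated primes number fewer than $r^2$ --- exactly the range needed. Your one-shot random partition replaces this iteration entirely: disjointness of the extracted $h$-subsets is automatic from disjointness of the blocks, and linearity of expectation over the $\geq r^3$ marginally-uniform blocks delivers $\geq r^2$ good blocks simultaneously, of which at least $r^2-\omega(N) \geq j_{\max}$ avoid the primes of $N$. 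This is arguably tidier, since the greedy collision bound becomes tight precisely as the forbidden set approaches size $r^2$, whereas your surplus over $j_{\max}=\lfloor (r^2-\omega(N))/h\rfloor$ is comfortable. One correction to your motivational aside: at the first greedy step the collision probability is $O(r)\cdot r/|\mathcal{P}|=O(1/r^2)$, not of order $1/r$, so the greedy approach does not ``barely fail'' at the outset --- it is in fact the paper's argument, and only becomes delicate after many iterations. Both proofs rest on the same closing facts you state: the partial products remain $\equiv 1 \pmod{M}$ and square-free, so Korselt's criterion certifies each one as Carmichael.
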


\begin{proof}
Let $\widetilde{\mathcal{P}}_0$ consist of the prime factors of $N$. Since $r^3$ is small compared to the size of $\mathcal{P}$, for any $r$ randomly chosen primes from $\mathcal{P}$, the probability of our chosen set of primes meeting $\widetilde{\mathcal{P}}_0$ is less than $1/r$. Therefore, there exists a number congruent to 1 mod $M$ with $h$ prime factors all of which are contained in $\mathcal{P} \setminus \widetilde{\mathcal{P}}_0$.

Define the product of these $h$ primes to be $N_1$.
We add these prime factors to $\widetilde{\mathcal{P}}_0$ to yield $\widetilde{\mathcal{P}}_1$, and we then repeat the process to find $h$ more primes in $\mathcal{P} \setminus \widetilde{\mathcal{P}}_1$ whose product is also 1 mod $M$. By repeating the process as long as we can, we obtain in this way a list of numbers $N_1, \ldots, N_k$, where for all $j \leq k$, $N\prod_{i=1}^{j}N_i$ is a Carmichael number. Therefore every number in the range $[\omega(N), r^2]$ congruent to $\omega(N)$ mod $h$ can be written as $\omega$ evaluated on a Carmichael number.
\end{proof}

Combining Proposition \ref{prop:Lresidue} (with $\square = 1$) and Lemma \ref{lem:combinatorial}, we see that if $h_0$ is odd then every number in the range $[\omega(N), r^2]$ has a Carmichael number with that many prime factors. However, if $h_0$ is even, this is a priori only true for even numbers between $\omega(N)$ and $r^2$. In the next section we will explain how to overcome this obstacle.

\section{Constructing Carmichael Numbers With Odd Numbers of Prime Factors}

We use a very simple trick, fixing 3 as a prime factor at the start. It is always easier to show that there exists a product with specified conditions with an even number of prime factors than an odd number of prime factors, and by fixing a single prime factor to begin with, we use this to our advantage.

To do this, we need to return to Proposition \ref{prop:main}. By applying the methods of the previous three sections to separate values of $L$, $k$, and $\mathcal{P}$, we are ready to prove the following proposition.

\begin{proposition}\label{prop:oddconstruction}
For any $h \leq r$, we can find positive integers $x$, $L_1$, $L_2$, $k_1$, $k_2$, $A_1(h)$, $A_2(h)$ and sets $\mathcal{P}_1(h)$, $\mathcal{P}_2(h)$ satisfying all the conditions of Proposition \ref{prop:main}, with the additional property that we can pick $A_1(h)$ and $A_2(h)$ so that $\omega(A_1(h))$ and $\omega(A_2(h))$ are congruent to any two residues mod $h$ we like, as long as they correspond to arithmetic progressions containing even numbers.
\end{proposition}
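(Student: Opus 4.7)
The plan is to run the construction from Sections 3--5 twice -- once for each index $i \in \{1,2\}$ -- to produce $x, L_1, L_2, k_1, k_2, \mathcal{P}_1, \mathcal{P}_2$ satisfying all of the non-$A_i$ conditions of Proposition \ref{prop:main}, following the same bookkeeping as in \cite{L:2025}. The only genuinely new ingredient is the construction of $A_1(h)$ and $A_2(h)$ with the additional requirement $\omega(A_i(h)) \equiv a_i \pmod h$ for chosen residues $a_i$. As in the proof of Proposition \ref{prop:Lresidue}, we may assume each $a_i$ is even by doubling both $a_i$ and $h$ if necessary.

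To construct $A_1(h)$, I would upgrade Proposition \ref{prop:Lresidue} to a version that tracks all relevant constraints at once. Work in the enlarged group
\[
G = (\mathbb{Z}/L_1\mathbb{Z})^\times \oplus (\mathbb{Z}/L_2\mathbb{Z})^\times \oplus \mathrm{Syl}_2\bigl((\mathbb{Z}/k_2\mathbb{Z})^\times\bigr) \oplus \mathbb{Z}/h\mathbb{Z}
\]
and seek a sub-product $N$ of $P = \prod_{p \in \mathcal{P}_1^\star} p$ that equals $g = (1,\, 3^{-1},\, 3^{-1}|_2,\, a_1)$ in $G$. The third coordinate encodes the condition that $3N$ has odd order modulo $k_2$, since that subgroup is exactly the kernel of the projection to the $2$-Sylow. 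The Fourier expansion of $\mathbbm{1}_G(d = g)$ factors along the four summands, and by properties (v) and (vi) of Proposition \ref{prop:main} the odd parts of the four factors have pairwise coprime orders, so characters split cleanly into ``odd'' and ``$2$-part'' components.

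The odd-part characters are all killed: non-principal ones on $(\mathbb{Z}/L_1\mathbb{Z})^\times$ by the bound \eqref{eq:probbound} applied to $(L_1,\mathcal{P}_1)$; non-principal ones on the odd part of $(\mathbb{Z}/L_2\mathbb{Z})^\times$ and $(\mathbb{Z}/k_2\mathbb{Z})^\times$ by the generic coset equidistribution that yielded properties \ref{item:charsum1}--\ref{item:prod2} (an analogous bound for $\mathcal{P}_1$ modulo $L_2$ holds for the same reason); and non-principal characters on the odd part of $\mathbb{Z}/h\mathbb{Z}$ trivially, since each $p_j$ contributes $1$ in that coordinate. Properties \ref{item:orderbound} and (ix) force the $2$-Sylow piece of $G$ to have bounded exponent, so the machinery of Proposition \ref{alteredAGPprop} (which depends on $\lambda$ rather than the full size of the group) produces enough divisors of $P$ in the desired $2$-part coset to dominate the $\epsilon \ll e^{-\sqrt r}$ error. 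Property (viii) ensures $3^{-1}$ is a quadratic residue modulo $L_1 L_2$, so the target $g$ lies in a coherent main-term coset. This yields a valid $N = A_1(h)$; the construction of $A_2(h)$ is symmetric.

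The main obstacle is verifying that adjoining the factors $(\mathbb{Z}/L_2\mathbb{Z})^\times$ and $\mathrm{Syl}_2((\mathbb{Z}/k_2\mathbb{Z})^\times)$ to the group in Proposition \ref{prop:Lresidue} does not spoil the character-sum estimate. This is controlled because the new odd-part characters are killed by the same equidistribution mechanism as those modulo $L_1$, and the bounded exponent of the $2$-Sylow keeps the Davenport constant small enough for Proposition \ref{alteredAGPprop} to remain effective. Once $A_1(h)$ and $A_2(h)$ are in hand, the coprimality conditions (xix)--(xxi) of Proposition \ref{prop:main} are arranged as in \cite{L:2025} by removing the bounded number of prime factors of $A_1(h)A_2(h)$ from $\mathcal{P}_1$ and $\mathcal{P}_2$ -- a thinning that is negligible compared to $|\mathcal{P}_i|$.
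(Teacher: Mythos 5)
Your overall architecture matches the paper's up to the point where property \ref{item:oddorder} (that $3A_1$ have odd order modulo $k_2$) must be secured, and there your method diverges in a way that does not work. You propose to adjoin $\mathrm{Syl}_2\bigl((\mathbb{Z}/k_2\mathbb{Z})^{\times}\bigr)$ to the target group and to hit the (generally non-identity) element $3^{-1}$ in that component by the same character-sum argument as in Proposition \ref{prop:Lresidue}. But that argument only ever produces sub-products landing on the \emph{identity} of the $2$-part: for a quadratic character $\chi$ the factor $\prod_{j}\frac{1+\chi(p_j)}{2}$ vanishes unless every $p_j$ lies in $\ker\chi$, so the $2$-part of the main term localizes sub-products to the subgroup generated by the images of the $p_j$ and counts representations of the identity --- this is exactly why Proposition \ref{prop:Lresidue} requires $\square$ to be a quadratic residue and $a$ to be even, and it is the index-$2$ obstruction flagged in the introduction. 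There is no lower bound (the count can be zero) for sub-products equal to a prescribed non-identity element such as the image of $3^{-1}$ in $\mathrm{Syl}_2\bigl((\mathbb{Z}/k_2\mathbb{Z})^{\times}\bigr)$, which need not even lie in the subgroup generated by the $p_j \bmod k_2$. Property (viii) makes $3$ a quadratic residue modulo $L_1L_2$, not modulo $k_2$, and even that would only trivialize the index-$2$ quotient, not the full $2$-Sylow, whose exponent is $2^{O(1)}$ rather than $2$.

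The paper sidesteps this entirely: it first applies Proposition \ref{prop:Lresidue} with $\square$ corresponding to $1 \bmod L_1$ and $3^{-1} \bmod L_2$ to get a single admissible $A$, then uses Proposition \ref{alteredAGPprop} on the prime factors of $AP$ to manufacture an enormous family $\mathcal{A}_1$ of admissible candidates, and finally argues (via Equation (11) of \cite{L:2025}) that for \emph{almost all} choices of $k_2$ at least one $A_1 \in \mathcal{A}_1$ has $3A_1$ of odd order modulo $k_2$. That is, the odd-order condition is obtained by averaging over $k_2$ against many candidates, not by solving a congruence in the $2$-Sylow modulo a fixed $k_2$. To repair your argument you should drop the $\mathrm{Syl}_2\bigl((\mathbb{Z}/k_2\mathbb{Z})^{\times}\bigr)$ coordinate and substitute this many-candidates step, checking that every member of $\mathcal{A}_1$ retains the residue $a_1 \bmod h$ and even parity.
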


\begin{proof}
The construction of $x$, $L_1$, $L_2$, $k_1$, and $k_2$ has already been done, applying the $\mathcal{Q}$-splitting of \cite{L:2025} to our work in the previous sections. Let $A$ be one of the integers guaranteed by Proposition \ref{prop:Lresidue}, with $\square$ corresponding to $1 \bmod{L_1}$ and $3^{-1} \bmod{L_2}$. Let $P$ be the largest square-free integer congruent to 1 mod $L_1L_2$ whose prime factors lie in $\mathcal{P}_1$ and which is prime to $A$. Finally, apply Proposition \ref{alteredAGPprop} to the sequence of prime factors of $AP$ to construct a set $\mathcal{A}_1$ whose elements are congruent to 1 mod $L_1$ and $3^{-1}$ mod $L_2$. Taking $t$ to be twice the Davenport constant of $(\mathbb{Z}/L_1L_2\mathbb{Z})^{\times}$, we have in particular that
$$\log |\mathcal{A}_1| \gg t \log x,$$
recalling Equation \eqref{eq:Pbound}.
By Equation (11) of \cite{L:2025}, the resulting bound
$$\frac{\log \max_{A \in \mathcal{A}_1} A}{\log |\mathcal{A}_1|} = O(1)$$
implies that for almost all choices of $k_2$, there exists $A_1 \in \mathcal{A}_1$ such that $3A_1$ has odd order mod $k_2$. We subtract the prime factors of $A_1$ to obtain the final set $\mathcal{P}_1$. Of course, we can then perform the same operation with indices swapped.
\end{proof}

\begin{remark}
This goes beyond the original construction in \cite{L:2025} because here $h$, unlike $\ell_0$, is not an $O(1)$ quantity.
\end{remark}

Let $L = L_1L_2$. To finish off the argument, we specialize Lemma 15 from \cite{L:2025} to the case $w_0 = 0$ and $\ell_0 = 2$.

\begin{lemma}\label{hittarget}
Subject to the conditions of Proposition \ref{prop:main}, with 
$$G := (\mathbb{Z}/L\mathbb{Z})^{*}\oplus (\mathbb{Z}/k_2\mathbb{Z})^{*}\oplus \mathbb{Z}/2\mathbb{Z},$$
if $g \in G$ is 1 mod $L$, odd order mod $k_2$, and $0$ mod $2$, then for almost all $r$ element subsets $\mathcal{P}^{\star} \subset \mathcal{P}_1$, there exists a product of distinct elements in $\mathcal{P}^{\star}$ equal to $g$ as an element of $G$.
\end{lemma}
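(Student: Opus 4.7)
The plan is to reprise the character-sum argument from Proposition \ref{prop:Lresidue}, now on the enlarged target group $G$. Enumerate $\mathcal{P}^{\star} = \{p_1, \ldots, p_r\}$, identifying each $p_j$ with its image $(p_j, p_j, 1) \in G$. By orthogonality of characters on $G$, the number of subsets of $\mathcal{P}^{\star}$ whose product equals $g$ is
$$\frac{1}{|G|} \sum_{\chi \in \hat{G}} \overline{\chi(g)} \prod_{j=1}^{r} \bigl(1 + \chi(p_j)\bigr).$$
It suffices to show that this quantity is strictly positive for almost every $\mathcal{P}^{\star}$.

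Decompose $G = G_1 \oplus G_2 \oplus G_3$, where $G_1$ is the maximal odd-order subgroup of $(\mathbb{Z}/L\mathbb{Z})^{\times}$, $G_2$ is the maximal odd-order subgroup of $(\mathbb{Z}/k_2\mathbb{Z})^{\times}$, and $G_3$ collects the 2-power part (the 2-Sylows of these two factors together with $\mathbb{Z}/2\mathbb{Z}$). The hypotheses on $g$ place its image entirely in $G_2$, so $\overline{\chi(g)} = 1$ whenever the $G_1$- and $G_2$-projections of $\chi$ are both trivial. For $\chi$ non-principal on $G_1$, the coset-avoidance analysis leading to Equation \eqref{eq:probbound}, transferred to random $r$-subsets via a union bound, yields $\bigl|\prod_j (1 + \chi(p_j))/2\bigr| \leq e^{-\Omega(r/\lambda(L)^2)}$ for almost all $\mathcal{P}^{\star}$. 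For $\chi$ with $\chi_1$ principal but $\chi_2$ non-principal, Property \ref{item:charsum1} supplies $\Omega(|\mathcal{P}_1|/\log x)$ primes of $\mathcal{P}_1$ with $\chi_2(p) \neq 1$, and a Chernoff-type bound transfers the same decay to $\mathcal{P}^{\star}$. Summing over the $|G| \leq e^{O(\log^2 x)}$ characters, the combined bad contribution is bounded by $e^{-\sqrt{r}}$, using Equation \eqref{eq:Pbound}.

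For characters surviving this pruning, $\chi$ factors through $G_3$ and $\overline{\chi(g)} = 1$; orthogonality on $G_3$ collapses their contribution to $(|G_1|\,|G_2|)^{-1} \cdot \#\{S \subseteq \mathcal{P}^{\star} : \prod_{p \in S} p = e \text{ in } G_3\}$. Property \ref{item:orderbound} forces $\lambda(G_3) = O(1)$ and $|G_3| \leq (Lk_2)^{O(1)}$, whence Proposition \ref{prop:davenport2} bounds the Davenport constant of $G_3$ by $n = O(\log x)$. Proposition \ref{alteredAGPprop} applied in $G_3$ (using complementation if the total product of the $p_j$ in $G_3$ is not $e$) then furnishes at least $\binom{r}{t}\binom{r}{n}^{-1} = 2^{r(1-o(1))}$ such subsets for $t = \sqrt{r}$, comfortably dominating the $e^{-\sqrt{r}}$ error and establishing positivity. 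The hard part is the probabilistic transfer step: translating the fixed-$\mathcal{P}_1$ equidistribution data of Properties \ref{item:charsum1} and \ref{item:prod1} into uniform-in-$\chi$ bounds on random $r$-subsets tight enough to absorb the $e^{O(\log^2 x)}$ character count. This is essentially the only place where the expansion of the ambient group from Proposition \ref{prop:Lresidue} materially changes the argument.
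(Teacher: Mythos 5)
The paper itself gives no proof of this lemma: it is imported wholesale as the specialization $w_0=0$, $\ell_0=2$ of Lemma 15 of \cite{L:2025}. So the fair internal comparison is with the character-sum argument of Proposition \ref{prop:Lresidue}, which you correctly take as your template: orthogonality on $G$, pruning characters that are non-principal on the odd parts, collapsing the surviving sum to a count of subsets with trivial product in the $2$-part, and feeding Propositions \ref{prop:davenport2} and \ref{alteredAGPprop} into the main term. That architecture is the right one.

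There are, however, two genuine problems. The decisive one is your treatment of mixed characters. In Proposition \ref{prop:Lresidue} the second coordinate of each $p_j$ is the constant $1\in\mathbb{Z}/h\mathbb{Z}$, so $\chi_2(p_j)\chi_3(p_j)$ is independent of $j$ and the coset-avoidance bound \eqref{eq:probbound}, which excludes a \emph{fixed} target value, applies directly; the paper's proof says exactly this. In Lemma \ref{hittarget} the $(\mathbb{Z}/k_2\mathbb{Z})^{\times}$-coordinate of $p_j$ genuinely varies with $j$, so for a character non-principal on both $(\mathbb{Z}/L\mathbb{Z})^{\times}$ and $(\mathbb{Z}/k_2\mathbb{Z})^{\times}$ you must show $\chi_1(p_j)\neq\bigl(\chi_2(p_j)\chi_3(p_j)\bigr)^{-1}$ for many $j$, where the right-hand side now depends on $j$. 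Neither the coset bound nor Property \ref{item:charsum1} (a ``for any fixed $z$'' statement) delivers this; the joint equidistribution statements \ref{item:prod1} and \ref{item:prod2} about products of $10$ elements exist in Proposition \ref{prop:main} precisely to break this possible correlation, and you name-check them without using them. Your case split (``$\chi_1$ non-principal'' versus ``$\chi_1$ principal, $\chi_2$ non-principal'') tacitly assumes the components decouple, and your closing admission that the ``probabilistic transfer step'' is ``the hard part'' leaves exactly the crux of the lemma unproved. Separately, the main-term bound is wrong as stated: with $t=\sqrt{r}$, Proposition \ref{alteredAGPprop} yields only $\binom{r}{\sqrt{r}}\binom{r}{n}^{-1}\leq e^{O(\sqrt{r}\log r)}=2^{o(r)}$ subsets, not $2^{r(1-o(1))}$, and $2^{o(r)}$ does not dominate an error of size $2^{r}e^{-r^{1-o(1)}}|G_1|\,|G_2|$; you need $t$ of order $r$ (say $t=\lfloor r/2\rfloor$, together with the complementation you mention) for positivity to follow. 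The second issue is a fixable slip; the first is a missing argument.
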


Take $g$ to be 1 mod $L$, 1 mod $k_2$, and $0$ mod $2$. With probability 1, a subset of $\mathcal{P}_1$ of size $r$ has a sub-product congruent to 1 mod $Lk_2$ with an even number of prime factors. Therefore, there exists $h \leq r$ such that with probability at least $1/r$, a subset of $\mathcal{P}_1$ of size $r$ has a sub-product of length $h$ congruent to 1 mod $Lk_2$.

For this value of $h$, choose $A_1$ and $A_2$ in Proposition \ref{prop:oddconstruction} corresponding respectively to the residues $a$ and $0$ mod $h$, where we take $a$ to be an arbitrary even number.

Now take $g$ to be 1 mod $L$, $(3A_1)^{-1}$ mod $k_2$, and $0$ mod $2$. Let $\Pi_1$ be one of the products guaranteed by Lemma \ref{hittarget}. Now flipping the indices, this lemma can also produce $\Pi_2$ which is congruent to 1 mod $L$ and $(3A_2)^{-1}$ mod $k_1$, while having an even number of prime factors. 

Now take $N = 3A_1A_2\Pi_1\Pi_2$. Note that $N$ is congruent to 1 mod $L$, $k_1$, and $k_2$, meaning in particular that $N$ is congruent to 1 mod $Lk_1$ and $Lk_2$. Moreover, $\omega(N)$ is congruent to $a+1+\omega(\Pi_1\Pi_2)$ mod $h$.

By Lemma \ref{lem:combinatorial}, every number in the range $[\omega(N), r^2]$ congruent to $a+1+\omega(\Pi_1\Pi_2)$ mod $h$ can be written as $\omega$ evaluated on a Carmichael number.

Since $a$ was an arbitrary even number, every odd number in the range $[\omega(N), r^2]$ has the property that it can be written as $\omega$ evaluated on a Carmichael number. Combining this with our work in the previous section, every number in the range $[\omega(N), r^2]$ has this property.

Clearly $\omega(N) \leq 4r+1$. Recalling the definition of $r$, for every number in the range $[4y_5^{\sqrt{y_2 w}}+1, y_5^{2\sqrt{y_2 w}}]$, there exists a Carmichael number with exactly that many prime factors. Since
$$y_2 = y_5^{1/6-1/500} \quad \text{and} \quad w = \lfloor y_5^{(1/6-1/500) + (1/1000)(1/24-1/500)}\rfloor,$$
we may immediately deduce Theorem \ref{thm:main} by repeating the construction in this paper, incrementing $y_5$ by one each time. Indeed, it is clear that these intervals will overlap and their union will contain all sufficiently large integers.


\begin{thebibliography}{9}\footnotesize

\bibitem{AGP:1994}
W. R. Alford, A. Granville, and C. Pomerance, There are infinitely many Carmichael numbers, {\it Ann. of Math.} {\bf 140} (1994), 703--722.

\bibitem{AGHS:2014}
W. R. Alford, J. Grantham, S. Hayman, and A. Shallue, Constructing Carmichael numbers through improved subset-product algorithms, {\it Math. Comp.} {\bf 83} (2014), 899--915.

\bibitem{C:1935}
J. Chernick, On Fermat's simple theorem, {\it Bull. Amer. Math. Soc. (N.S.)} {\bf 45} (1935), 269--274.

\bibitem{GP:2002}
A. Granville and C. Pomerance, Two contradictory conjectures concerning Carmichael numbers, {\it Math. Comp.} {\bf 71} (2002), 883--908.

\bibitem{K:1899}
A. Korselt, Problème chinois, {\it L'Intermédiaire des Mathématiciens} {\bf 6} (1899), 142--143.

\bibitem{L:2025}
D. Larsen, Carmichael numbers in all possible arithmetic progressions, preprint, {\tt arXiv:2504.09056}.

\bibitem{M:2013}
K. Matom\"{a}ki, On Carmichael numbers in arithmetic progressions, {\it J. Aust. Math. Soc.} {\bf 94} (2) (2013), 1--8.

\bibitem{W:2013}
T. Wright, Infinitely many Carmichael numbers in arithmetic progressions, {\it Bull. Lond. Math. Soc.} {\bf 45} (2013), 943--952.

\bibitem{W:2024a}
T. Wright, Carmichael numbers with a prime number of prime factors, {\it Integers} {\bf 24} (2024), \#A97.

\end{thebibliography}
\end{document}